\documentclass{article}

\newcommand{\limp}{\Rightarrow}

\renewcommand{\lnot}{\neg\,}
\newcommand{\leT}{\le_{\mathrm{T}}}
\newcommand{\Phibar}{\overline\Phi}
\newcommand{\Phibarbar}{\overline{\Phibar}}
\newcommand{\Psitilde}{\widetilde\Psi}
\newcommand{\Mhat}{\widehat M}

\usepackage{amsthm}
\theoremstyle{definition}
\newtheorem{thm}{Theorem}[section]
\newtheorem{lem}[thm]{Lemma}
\newtheorem{cor}[thm]{Corollary}
\newtheorem{dfn}[thm]{Definition}
\newtheorem{rem}[thm]{Remark}

\usepackage[backref=page,colorlinks=true,allcolors=blue]{hyperref}

\begin{document}

\title{Comparing WO$(\omega^\omega)$ with $\Sigma^0_2$ induction}

\author{Stephen G. Simpson\\
  Department of Mathematics\\
  Pennsylvania State University\\
  University Park, PA 16802, USA\\
  \href{http://www.math.psu.edu/simpson}{http://www.math.psu.edu/simpson}\\
  \href{mailto:simpson@math.psu.edu}{simpson@math.psu.edu}}

\date{First draft: July 15, 2015\\
  This draft: July 27, 2015}

\maketitle

\begin{abstract}
  Let WO$(\omega^\omega)$ be the statement that the ordinal number
  $\omega^\omega$ is well ordered.  WO$(\omega^\omega)$ has occurred
  several times in the reverse-mathematical literature.  The purpose
  of this expository note is to discuss the place of
  WO$(\omega^\omega)$ within the standard hierarchy of subsystems of
  second-order arithmetic.  We prove that WO$(\omega^\omega)$ is
  implied by I$\Sigma^0_2$ and independent of B$\Sigma^0_2$.  We also
  prove that WO$(\omega^\omega)$ and B$\Sigma^0_2$ together do not
  imply I$\Sigma^0_2$.
\end{abstract}

\tableofcontents

\vfill

\noindent\small Keywords: reverse mathematics, proof-theoretic
ordinals, fragments of arithmetic.\\[10pt]
2010 MSC: Primary 03B30; Secondary 03F15, 03F30, 03F35.\\[10pt]
The author's research is supported by Simons Foundation Collaboration
Grant 276282.

\newpage

\section{Introduction}
\label{sec:intro}

In the language of second-order arithmetic, let WO$(\omega^\omega)$ be
the statement that $\omega^\omega$ is well ordered.\footnote{More
  precisely, WO$(\omega^\omega)$ is the statement that the standard
  set of Cantor normal form notations for the ordinal numbers less
  than $\omega^\omega$ is well ordered.}  In
\cite{hatz-ordinal,rmacc,hbt} it was shown that several theorems of
abstract algebra, including the Hilbert Basis Theorem, are
reverse-mathematically equivalent to WO$(\omega^\omega)$.  It is
therefore of interest to understand the place of WO$(\omega^\omega)$
within the usual hierarchy of subsystems of second-order arithmetic
\nocite{godel-fps}\cite{sosoa,gh}.

In this expository note we prove the following results.
\begin{itemize}
\item WO$(\omega^\omega)$ is provable from RCA$_0$ + $\Sigma^0_2$
  induction.
\item WO$(\omega^\omega)$ and $\Sigma^0_2$ bounding are independent of
  each other over RCA$_0$.
\item $\Sigma^0_2$ induction is not provable from RCA$_0$ +
  WO$(\omega^\omega)$ + $\Sigma^0_2$ bounding.
\end{itemize}
These results are perhaps well known and implicit in the literature on
fragments of arithmetic \cite{hajek-pudlak,kr-yo-bme-arxiv-v4}.  Our
reason for writing them up here is that, because of
\cite{hatz-ordinal,rmacc,hbt}, they deserve attention in the
reverse-mathematical context \cite{sosoa}.  I thank Keita Yokoyama for
explaining these results to me during a visit to Penn State, July
11--16, 2015.

\section{I$\Sigma^0_2$ implies WO$(\omega^\omega)$}
\label{sec:WO}

In this section we show that WO$(\omega^\omega)$ is provable in
RCA$_0$ + I$\Sigma^0_2$ but not in RCA$_0$ + B$\Sigma^0_2$.  Our
arguments in this section have a proof-theoretical flavor.

\begin{dfn}
  Let $\Phi$ range over $\Sigma^0_k$ formulas in the language of
  second-order arithmetic.  Note that $\Phi$ may contain free number
  variables and free set variables.  We consider the following
  schemes.
  \begin{enumerate}
  \item I$\Sigma^0_k$ is the \emph{$\Sigma^0_k$ induction} principle,
    i.e., the universal closure of
    \begin{center}
      $(\Phi(0)\land\forall i\,(\Phi(i)\limp\Phi(i+1)))\limp\forall
      i\,\Phi(i)$.
    \end{center}
  \item B$\Sigma^0_k$ is the \emph{$\Sigma^0_k$ bounding} principle,
    i.e., the universal closure of
    \begin{center}
      $(\forall i\,\exists j\,\Phi(i,j))\limp\forall m\,\exists
      n\,(\forall i<m)\,(\exists j<n)\,\Phi(i,j)$.
    \end{center}
  \end{enumerate}
  Note that I$\Sigma^0_k$ was called $\Sigma^0_k$-IND in \cite[Remark
  I.7.9]{sosoa}.  It is known that I$\Sigma^0_{k+1}$ implies
  B$\Sigma^0_{k+1}$ and B$\Sigma^0_{k+1}$ implies I$\Sigma^0_k$.
\end{dfn}

\begin{thm}
  WO$(\omega^\omega)$ is provable in RCA$_0$ + I$\Sigma^0_2$.
\end{thm}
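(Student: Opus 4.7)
The plan is to argue by contradiction: assume $f\colon\mathbb{N}\to\omega^\omega$ is an infinite strictly descending sequence of Cantor normal form codes, and derive a contradiction using I$\Sigma^0_2$. Since $f$ is weakly decreasing, every $f(n)$ lies below $\omega^{N+1}$, where $N$ is the leading exponent of $f(0)$. Hence each $f(n)$ is determined by its coefficient tuple $(d_N(n),\dots,d_0(n))\in\mathbb{N}^{N+1}$, where $f(n) = \omega^N d_N(n) + \omega^{N-1} d_{N-1}(n) + \cdots + d_0(n)$, and on $\omega^{N+1}$ the ordinal ordering matches lex on these tuples. The functions $(n,i)\mapsto d_i(n)$ are $\Delta^0_0$ in $f$.

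The induction is on $k$, applied to the formula
\[
  \Lambda(k) \;\equiv\; \exists n_0\,\exists\sigma\,\bigl(\mathrm{lh}(\sigma)=k \;\land\; \forall n\ge n_0\,\forall i<k\,(d_{N-i}(n)=(\sigma)_i)\bigr),
\]
which says that the top $k$ coordinates of the tuple representation eventually stabilize. Coding the witnessing tuple as a single number $\sigma$ makes $\Lambda(k)$ properly $\Sigma^0_2$ in $k$ (with $f$ as a set parameter). $\Lambda(0)$ holds trivially with $n_0=0$ and $\sigma$ the empty sequence. For the step $\Lambda(k)\limp\Lambda(k+1)$, fix witnesses from $\Lambda(k)$; since the top $k$ coordinates are constant on $[n_0,\infty)$, strict lex-descent of $f$ forces $d_{N-k}$ to be weakly decreasing on $[n_0,\infty)$. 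Then the least element principle (which holds in RCA$_0$ via $\Sigma^0_1$ induction) produces some $n_1\ge n_0$ where $d_{N-k}$ attains its minimum value $a$, and weak monotonicity forces $d_{N-k}(n)=a$ for all $n\ge n_1$; appending $a$ to $\sigma$ yields a witness for $\Lambda(k+1)$.

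Applying I$\Sigma^0_2$ to $\Lambda$ yields $\forall k\,\Lambda(k)$. Instantiating at $k=N+1$ asserts that every coefficient of $f(n)$ is eventually constant, so $f$ itself is eventually constant on some tail $[n_0,\infty)$. This directly contradicts strict descent, completing the proof.

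The conceptual obstacle is choosing the right formula to induct on. A naive choice like ``there is no infinite descending sequence in $\omega^{k+1}$'' is $\Pi^1_1$ in $k$ and would demand $\Pi^1_1$ induction. The key move is to fix $f$ at the outset, reformulate ``stability at level $k$'' as the coordinate-stabilization formula $\Lambda(k)$ above, and verify that it has exactly the one-existential-over-one-universal shape of $\Sigma^0_2$ (by coding the growing tuple as a single number). Once that is arranged, the induction step itself is a routine appeal to the least element principle inside RCA$_0$.
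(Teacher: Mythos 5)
Your proof is correct, but it takes a genuinely different route from the paper's. The paper also fixes the descending sequence $f$ at the outset, but then performs $\Pi^0_2$ induction on the exponent $n$ applied to the absorption statement $\Phi(n,f)\equiv\forall\alpha\,(\exists i\,(f(i)<\alpha+\omega^n)\limp\exists i\,(f(i)<\alpha))$ --- a miniature Gentzen-style reduction --- and concludes at $\alpha=0$ that $f$ never descends below any $\omega^n$, contradicting $\omega^\omega=\sup_n\omega^n$. You instead exploit structure special to $\omega^\omega$: once $f(0)$ is fixed, every $f(n)$ is a coefficient tuple of uniformly bounded length $N+1$ ordered lexicographically, and your formula $\Lambda(k)$ asserts eventual stabilization of the top $k$ coefficients. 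Both inductive steps bottom out in the same least-element appeal inside RCA$_0$ (the paper takes the least $m$ with $\exists i\,(f(i)<\alpha+\omega^n\cdot m)$; you take the least attained value of $d_{N-k}$). Your version has the small advantage that the induction formula is literally $\Sigma^0_2$, so you never need the standard equivalence of I$\Sigma^0_2$ with I$\Pi^0_2$ over RCA$_0$ that the paper's $\Pi^0_2$ induction tacitly uses, and the stabilization picture is concrete. The paper's version scales better: the absorption lemma, being uniform in $\alpha$, is exactly the step one iterates to obtain WO$(\omega_k)$ from I$\Sigma^0_k$ as in the remark closing Section 2, whereas your tuple representation is tied to the fact that ordinals below $\omega^\omega$ have Cantor normal forms of length bounded in advance, and so it does not lift past $\omega^\omega$. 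Two details you should spell out in a polished write-up, though neither is a gap: weak decrease of $d_{N-k}$ on the whole tail $[n_0,\infty)$ (not merely between consecutive indices) needs a small $\Delta^0_1$ induction, and the minimum value $a$ comes from the $\Sigma^0_1$ least-number principle applied to the predicate ``$v$ is attained by $d_{N-k}$ on $[n_0,\infty)$''; both are available in RCA$_0$.
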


\begin{proof}
  We reason in RCA$_0$ + I$\Sigma^0_2$.  Assume that $f$ is a
  descending sequence through $\omega^\omega$.  Consider the $\Pi^0_2$
  formula $\Phi(n,f)\equiv\forall\alpha\,($if $\exists
  i\,(f(i)<\alpha+\omega^n)$ then $\exists i\,(f(i)<\alpha))$.  By
  $\Pi^0_2$ induction on $n$ we prove $\forall n\,\Phi(n,f)$.
  Trivially $\Phi(0,f)$ holds.  Assume inductively that $\Phi(n,f)$
  holds, and let $\alpha$ be such that $\exists
  i\,(f(i)<\alpha+\omega^{n+1})$.  We then have $\exists m\,\exists
  i\,(f(i)<\alpha+\omega^n\cdot m)$, so by $\Pi^0_1$ induction there
  is a least such $m$.  If $m=0$ then $\exists i\,(f(i)<\alpha)$ and
  we are done.  If $m=l+1$ then $\exists i\,(f(i)<\alpha+\omega^n\cdot
  l+\omega^n)$, so by $\Phi(n,f)$ we have $\exists
  i\,(f(i)<\alpha+\omega^n\cdot l)$ contradicting our choice of $m$.
  We now see that $\forall n\,\Phi(n,f)$ holds.  For $\alpha=0$ this
  says that $\forall n\,($if $\exists i\,(f(i)<\omega^n)$ then
  $\exists i\,(f(i)<0))$, or in other words $\forall n\,\forall
  i\,(f(i)\ge\omega^n)$, contradicting the fact that
  $\omega^\omega=\sup_n\omega^n$.
\end{proof}

\begin{thm}
  WO$(\omega^\omega)$ is not provable in RCA$_0$ + B$\Sigma^0_2$.
\end{thm}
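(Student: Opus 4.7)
The plan is to construct a non-$\omega$-model $(M, \mathcal{S})$ of RCA$_0 + $B$\Sigma^0_2$ in which WO$(\omega^\omega)$ fails.

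First, I would invoke the classical Paris--Kirby theorem to produce a countable first-order model $M \models $B$\Sigma_2 + \neg I\Sigma_2$, realized as a proper semi-regular cut of some ambient $M^* \models $PA. Taking $\mathcal{S}$ to be the subsets of $M$ that are coded by elements of $M^*$, a standard argument then gives $(M, \mathcal{S}) \models$ RCA$_0 + $B$\Sigma^0_2$.

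The heart of the proof is to exhibit a $\Delta^0_1$-definable descending sequence through $\omega^\omega$ in this second-order model. The failure of I$\Sigma_2$ in $M$ supplies a proper $\Sigma_2$-definable initial segment $J$ of $M$ that is closed under successor. I would use $J$ to define a sequence $\sigma\colon\mathbb{N}\to\omega^\omega$ (with $\mathbb{N}$ interpreted as $M$) whose descent is driven by the cut: the intuition is that the $\Pi^0_2$-induction used in the previous theorem to derive WO$(\omega^\omega)$ from I$\Sigma^0_2$ fails inside $M$ exactly at $J$, and this failure can be converted into a concrete descending sequence whose graph lies in $\mathcal{S}$.

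The main obstacle will be this conversion. A naive canonical descent from a single $\omega^a$ with $a \in M$ terminates in primitive-recursively many steps, all within $M$, so it does not yield a sequence descending at every index of $M$. The descent rule must instead be tailored to the $\Sigma_2$-cut $J$---for example, by letting the descent step at index $i$ draw on a $J$-witness that grows with $i$---so that termination of the descent would contradict the characteristic property of the Paris--Kirby cut. Verifying $\Delta^0_1$-definability, coding in $M^*$, and strict decrease at every $i \in M$ is the remaining delicate bookkeeping.
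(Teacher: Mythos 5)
Your proposal contains a genuine and, in fact, fatal gap at what you yourself call the heart of the proof: the conversion of the $\Sigma_2$-definable cut $J$ (witnessing $\lnot$I$\Sigma_2$) into a coded descending sequence through $\omega^\omega$. No such conversion can exist in general, and the paper itself proves this: by Lemma \ref{lem:is2}, Theorem \ref{thm:is2}, and Corollary \ref{cor:is2}, the theory RCA$_0$ + B$\Sigma^0_2$ + WO$(\omega^\omega)$ + $\lnot\,$I$\Sigma^0_2$ is consistent. So there are models of exactly the kind you construct --- first-order part satisfying B$\Sigma_2$ + $\lnot$I$\Sigma_2$, hence containing a proper $\Sigma_2$-definable cut closed under successor --- whose second-order part nevertheless satisfies WO$(\omega^\omega)$, i.e., contains no descending sequence at all. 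The failure of I$\Sigma_2$ is therefore strictly too weak an input: what makes WO$(\omega^\omega)$ fail in a model is not the presence of a $\Sigma_2$ cut but the failure of the first-order universe to be closed under Ackermann-level growth. (A secondary slip: for the coded sets of a cut to satisfy B$\Sigma^0_2$ you need the cut to be \emph{regular} in the Kirby--Paris sense; semi-regularity yields only I$\Sigma^0_1$/WKL$_0$.)

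The repair essentially forces you into the paper's actual argument, which is proof-theoretic rather than a cut construction: RCA$_0$ + B$\Sigma^0_2$ is conservative over RCA$_0$ for $\Pi^0_2$ sentences \cite[Theorem IV.1.59]{hajek-pudlak}; the provably total recursive functions of RCA$_0$ are exactly the primitive recursive ones, so totality of the Ackermann function (a $\Pi^0_2$ sentence) is unprovable in RCA$_0$ + B$\Sigma^0_2$; but RCA$_0$ + WO$(\omega^\omega)$ does prove Ackermann totality, since the standard descent along $\omega^\omega$ bounds the Ackermann recursion. Model-theoretically: take a model of RCA$_0$ + B$\Sigma^0_2$ in which the Ackermann function is not total --- it exists by conservativity --- and observe that WO$(\omega^\omega)$ must fail there; the descending sequence is then supplied by the non-terminating Ackermann computation, not by the cut $J$. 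If you want to keep your cut-based framing, you must choose the cut to be closed under the primitive recursive functions but not under the Ackermann function of some nonstandard point, and separately verify B$\Sigma^0_2$ for the coded sets; merely citing Paris--Kirby for $\lnot$I$\Sigma_2$ cannot work.
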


\begin{proof}
  It is known \cite[\S IX.3]{sosoa} that the provably total recursive
  functions of RCA$_0$ are just the primitive recursive functions.  In
  particular, totality of the Ackermann function is not provable in
  RCA$_0$.  It is also known \cite[Theorem
  IV.1.59]{hajek-pudlak}\footnote{See also the proofs of Lemma
    \ref{lem:is2} and Theorem \ref{thm:is2} below.} that RCA$_0$ +
  B$\Sigma^0_2$ is conservative over RCA$_0$ for $\Pi^0_2$ sentences.
  Therefore, totality of the Ackermann function is not provable in
  RCA$_0$ + B$\Sigma^0_2$.  On the other hand, totality of the
  Ackermann function is straightforwardly provable in RCA$_0$ +
  WO$(\omega^\omega)$.
\end{proof}

\begin{rem}
  More generally, for each $k\ge2$, letting $\omega_k=$ a stack of
  $\omega$'s of height $k$, it is known that WO$(\omega_k)$ is
  provable in RCA$_0$ + I$\Sigma^0_k$ and not provable in RCA$_0$ +
  B$\Sigma^0_k$.  These results belong to Gentzen-style proof theory.
\end{rem}

\section{WO$(\omega^\omega)$ does not imply B$\Sigma^0_2$}
\label{sec:bs2}

In this section we show that B$\Sigma^0_2$ is not provable in RCA$_0$
+ WO$(\omega^\omega)$.  Our arguments in this section and the next
have a model-theoretical flavor.

\begin{dfn}
  I$\Sigma_k$ and B$\Sigma_k$ consist of basic arithmetic plus the
  respective restrictions of I$\Sigma^0_k$ and B$\Sigma^0_k$ to the
  language of first-order arithmetic \cite{hajek-pudlak}.  It is known
  that I$\Sigma_{k+1}$ implies B$\Sigma_{k+1}$ and B$\Sigma_{k+1}$
  implies I$\Sigma_k$.
\end{dfn}

\begin{rem}
  \label{rem:unif}
  In the language of first-order arithmetic, let $\Phi(x)$ be a
  $\Sigma_{k+1}$ formula with a distinguished free variable $x$.
  Write $\Phi(x)$ as $\exists y\,\Theta(x,y)$ where $\Theta(x,y)$ is a
  $\Pi_k$ formula.  Let $\Phibar(x)$ be the $\Sigma_{k+1}$ formula
  \begin{center}
    $\exists z\,((z)_1=x\land\Theta((z)_1,(z)_2)\land\lnot(\exists
    w<z)\,\Theta((w)_1,(w)_2))$.
  \end{center}
  The universal closures of the following are provable in I$\Sigma_k$.
  \begin{enumerate}
  \item $\forall x\,(\Phibar(x)\limp\Phi(x))$.
  \item $\forall x\,\forall x'\,((\Phibar(x)\land\Phibar(x'))\limp
    x=x')$.
  \item\label{it:Phibar3} $(\exists x\,\Phi(x))\limp(\exists
    x\,\Phibar(x))$.
  \end{enumerate}
  Items 1 and 2 are trivial, and for item 3 we use I$\Sigma_k$ to
  prove the existence of $z$.  See also the discussion of ``special''
  $\Sigma_{k+1}$ formulas in \cite[\S IV.1(d)]{hajek-pudlak}.  The
  passage from $\Phi(x)$ to $\Phibar(x)$ will be referred to as
  \emph{uniformization with respect to} the variable $x$.
\end{rem}

\begin{lem}
  \label{lem:bs2}
  In the language of first-order arithmetic, let $\Psi$ be a $\Pi_3$
  sentence.  If I$\Sigma_1$ + $\Psi$ is consistent, then I$\Sigma_1$ +
  $\Psi$ does not prove B$\Sigma_2$.
\end{lem}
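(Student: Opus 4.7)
The plan is model-theoretic: assuming $\mathrm{I}\Sigma_1+\Psi$ is consistent, we construct a model of $\mathrm{I}\Sigma_1+\Psi+\lnot\mathrm{B}\Sigma_2$. Using completeness and L\"owenheim--Skolem, fix a countable $M\models\mathrm{I}\Sigma_1+\Psi$; by a compactness argument (adding a new constant $c$ together with axioms $c>n$ for each standard $n$), we may further assume $M$ is nonstandard. The core construction is to exhibit a proper initial segment $I$ of $M$ that is $\Sigma_2$-elementary in $M$ and in which $\mathrm{B}\Sigma_2$ fails; then $I$ is the desired countermodel.

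Granted $\Sigma_2$-elementarity, the positive axioms transfer to $I$ for free. For $\mathrm{I}\Sigma_1$: $\Sigma_2$-elementarity implies $\Sigma_1$-elementarity, so given a nonempty $\Sigma_1$-definable (in $I$, with parameters from $I$) subset of $I$, its $M$-least element---provided by $M\models\mathrm{I}\Sigma_1$---sits below any $I$-witness and hence lies in the initial segment $I$, and by $\Sigma_1$-absoluteness it is also the $I$-least; this gives the $\Sigma_1$ least-number principle, hence $I\models\mathrm{I}\Sigma_1$. For $\Psi$: writing $\Psi\equiv\forall x\,\exists y\,\forall z\,\theta(x,y,z)$ with $\theta\in\Delta_0$, for each $x\in I$ the $\Sigma_2$ sentence $\exists y\,\forall z\,\theta(x,y,z)$ holds in $M$, hence in $I$ by $\Sigma_2$-elementarity, so $I\models\Psi$.

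To construct $I$, I would perform a back-and-forth cut construction. Fix a nonstandard $c\in M$ and enumerate both the elements of $M$ and all pairs consisting of a $\Sigma_2$ formula and a tuple of parameters from $M$. Build an increasing chain $b_0<b_1<\cdots<c$ in $M$ and set $I=\bigcup_n[0,b_n]$. At each stage, close under $\Sigma_2$ Skolem witnesses (available in $\mathrm{I}\Sigma_1$ via the uniformization of Remark~\ref{rem:unif}), choosing witnesses below $c$ to preserve properness; simultaneously, arrange that some $\Sigma_2$-definable-in-$I$ function with bounded domain has range cofinal in $I$, which forces $I\models\lnot\mathrm{B}\Sigma_2$.

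The main obstacle is exactly this juggling act: reconciling $\Sigma_2$-elementarity (which demands many Skolem witnesses inside $I$) with properness of the cut and with an engineered $\mathrm{B}\Sigma_2$-violating function definable inside $I$. This is classical cut-construction technology for countable nonstandard models of $\mathrm{I}\Sigma_1$, developed in detail in \cite[Chapter~IV]{hajek-pudlak}, and I expect the author's proof to follow these lines.
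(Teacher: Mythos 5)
Your overall strategy---pass to a $\Sigma_2$-elementary substructure of a nonstandard $M\models\mathrm{I}\Sigma_1+\Psi$, and note that $\Pi_3$ consequences transfer down---is the right frame, and your transfer arguments for $\mathrm{I}\Sigma_1$ and $\Psi$ are correct and match the paper. But the core construction you propose is not merely delicate, it is provably impossible. You ask for a \emph{proper initial segment} $I$ of $M$ with $I\prec_{\Sigma_2}M$ and $I\not\models\mathrm{B}\Sigma_2$. No such $I$ exists: any proper cut of a model of $\mathrm{I}\Sigma_1$ that is even $\Sigma_1$-elementary automatically \emph{satisfies} $\mathrm{B}\Sigma_2$. (Given $a\in I$ and $\Phi\in\Pi_1$ with $I\models(\forall x<a)(\exists y)\,\Phi(x,y)$, push the witnesses up to $M$ by absoluteness; any $b\in M\setminus I$ bounds them all since $I$ is downward closed; $\mathrm{I}\Sigma_1$ in $M$ yields a least such $b$, which lies in $I$ by underspill; then pull the bounded statement back down.) This is exactly Claim~2 in the paper's proof of Lemma~\ref{lem:is2}, where the phenomenon is exploited in the \emph{opposite} direction: $\Sigma_1$-elementary proper cuts are the paper's device for producing models of $\mathrm{B}\Sigma_2+\lnot\mathrm{I}\Sigma_2$ in Section~\ref{sec:is2}. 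Your second gadget is also mis-aimed: a $\Sigma_2$-definable map with bounded domain and range cofinal in $I$ refutes $\mathrm{I}\Sigma_2$, not $\mathrm{B}\Sigma_2$---its domain need not be an interval, so the totality hypothesis of collection is never triggered, and indeed the paper's $\Mhat_2$ in Lemma~\ref{lem:is2} carries exactly such a map while satisfying $\mathrm{B}\Sigma_2$ (Claims~2 and~3 there). So no amount of back-and-forth juggling will reconcile your three requirements.

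The fix, and the paper's actual route, is to drop the initial-segment requirement entirely. Take $M_2=\{x\in M\mid x$ is $\Sigma_2(M)$-definable from a fixed nonstandard $c\}$, the pointwise $\Sigma_2$-definable hull; the uniformization of Remark~\ref{rem:unif} shows every nonempty $\Sigma_2$-definable-from-$c$ set contains a $\Sigma_2$-definable-from-$c$ element, so Tarski--Vaught gives $M_2\prec_{\Sigma_2}M$, and $\mathrm{I}\Sigma_1$ and $\Psi$ descend as you argued. This $M_2$ is typically \emph{not} a cut of $M$, which is what evades the obstruction above. The failure of $\mathrm{B}\Sigma_2$ then comes from a counting argument rather than cofinality: fix a universal $\Sigma_2$ formula $\Phi(e,x,c)$; every $x\in M_2$ has a standard index $e<c$, and uniformizing twice (in $x$, then in $e$) yields a $\Delta_2(M_2)$-definable \emph{injection} $x\mapsto e_x$ of all of $M_2$ into $\{e<c\}$. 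If $M_2\models\mathrm{B}\Sigma_2$, the restriction of this map to $\{x\le c\}$ would be $M_2$-finite, giving a coded injection of the $M_2$-finite set $\{x\le c\}$ into its proper subset $\{e<c\}$, contradicting the pigeonhole principle available from $\mathrm{I}\Sigma_1$. Note how totality and injectivity of the map on the whole model, plus coding, do the work that your cofinal-range gadget cannot.
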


\begin{proof}
  Let $M$ be a nonstandard model of I$\Sigma_1$ + $\Psi$.  Fix a
  nonstandard element $c\in M$.  By Remark \ref{rem:unif} we know that
  every nonempty subset of $M$ which is $\Sigma_2(M)$-definable from
  $c$ contains an element which is $\Sigma_2(M)$-definable from $c$.
  Hence
  \begin{center}
    $M_2=\{x\in M\mid x$ is $\Sigma_2(M)$-definable from $c\}$
  \end{center}
  is a $\Sigma_2$-elementary submodel of $M$.  Therefore, since $\Psi$
  is a $\Pi_3$ sentence, $M_2$ satisfies $\Psi$.  And likewise, since
  I$\Sigma_1$ is axiomatized by $\Pi_3$ sentences, $M_2$ satisfies
  I$\Sigma_1$.  We shall finish the proof by showing that $M_2$ does
  not satisfy B$\Sigma_2$.

  Let $\Phi(e,x,c)$ be a $\Sigma_2$ formula which is \emph{universal}
  in sense that, as $e$ ranges over the natural numbers, $\Phi(e,x,c)$
  ranges over all $\Sigma_2$ formulas with one free variable $x$ and
  one parameter $c$.  For each $x\in M_2$ we know that $x$ is
  $\Sigma_2(M_2)$-definable from $c$, i.e., there exists a natural
  number $e$ such that $x$ is the unique element of $M_2$ such that
  $M_2$ satisfies $\Phi(e,x,c)$.  Moreover, since $e$ is a natural
  number and $c$ is nonstandard, we have $e<c$.  Uniformizing with
  respect to $x$, we see that $M_2$ satisfies $\Phibar(e,x,c)$ for all
  such pairs $e,x$.  Uniformizing again with respect to $e$, we see
  that for each $x\in M_2$ there is exactly one $e=e_x\in M_2$ such
  that $e<c$ and $M_2$ satisfies $\Phibarbar(e,x,c)$.  We now have a
  mapping $x\mapsto e_x$ which is $\Delta_2(M_2)$-definable from $c$
  and maps $M_2$ one-to-one into $\{e\in M_2\mid e<c\}$.  If $M_2$
  were a model of B$\Sigma_2$, then the restriction of $x\mapsto e_x$
  to $\{x\in M_2\mid x\le c\}$ would be $M_2$-finite, so we would have
  an $M_2$-finite mapping of the $M_2$-finite set $\{x\in M_2\mid x\le
  c\}$ into its $M_2$-finite proper subset $\{e\in M_2\mid e<c\}$.
  This contradiction shows that $M_2$ cannot satisfy B$\Sigma_2$.
\end{proof}

\begin{thm}
  \label{thm:bs2}
  In the language of second-order arithmetic, let $\exists X\,\forall
  Y\,\Psi(X,Y)$ be a $\Sigma^1_2$ sentence such that $\Psi(X,Y)$ is
  $\Pi^0_3$.  If RCA$_0$ + $\exists X\,\forall Y\,\Psi(X,Y)$ is
  consistent, then RCA$_0$ + $\exists X\,\forall Y\,\Psi(X,Y)$ does
  not prove B$\Sigma^0_2$.
\end{thm}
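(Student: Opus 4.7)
The plan is to lift Lemma~\ref{lem:bs2} to the second-order setting.  Assume consistency of RCA$_0$ + $\exists X\,\forall Y\,\Psi(X,Y)$ and fix, by downward L\"owenheim--Skolem plus compactness, a countable nonstandard model $(M,\mathcal{S})$ with witness $X_0\in\mathcal{S}$ and nonstandard $c\in M$.  View $(M,X_0)$ as a first-order structure in the language expanded by a unary predicate for $X_0$: because the $\Sigma^0_1$ induction scheme of RCA$_0$ admits free set parameters, $(M,X_0)\models$ I$\Sigma_1^{X_0}$, so the construction in the proof of Lemma~\ref{lem:bs2} goes through verbatim in this expanded language.  Let $M_2=\{x\in M\mid x$ is $\Sigma_2^{X_0}(M)$-definable from $c\}$.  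Uniformization (Remark~\ref{rem:unif}, applied in I$\Sigma_1^{X_0}$) yields $M_2\preceq_{\Sigma_2^{X_0}}(M,X_0)$, so $M_2\models$ I$\Sigma_1^{X_0}$ (axiomatized by $\Pi_3$ sentences), and the universal $\Sigma_2^{X_0}$ formula produces a $\Delta_2^{X_0}(M_2)$-definable injection of $M_2$ into $\{e<c\}$ witnessing $M_2\not\models$ B$\Sigma_2^{X_0}$.

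Set $X_0'=X_0\cap M_2$ and take $\mathcal{S}_2$ to be the collection of subsets of $M_2$ that are $\Delta_1^{X_0'}(M_2)$-definable with number parameters from $M_2$.  Since I$\Sigma_1^{X_0'}$ gives B$\Sigma_1^{X_0'}$, the class $\Delta_1^{X_0'}$ is closed under Boolean operations and bounded quantification in $M_2$; substituting the $\Delta_1^{X_0'}$ definitions of any $Y_1,\ldots,Y_k\in\mathcal{S}_2$ therefore collapses any second-order $\Sigma^0_1$ formula to a $\Sigma_1^{X_0'}(M_2)$ formula in number parameters, from which the standard calculation gives $(M_2,\mathcal{S}_2)\models$ RCA$_0$.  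Clearly $X_0'\in\mathcal{S}_2$, and the $\Delta_2^{X_0}(M_2)$-definable injection above is now a $\Sigma^0_2$ injection with set parameter $X_0'$, so $(M_2,\mathcal{S}_2)\not\models$ B$\Sigma^0_2$ by the contradiction at the end of Lemma~\ref{lem:bs2}.

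The remaining task is to verify $(M_2,\mathcal{S}_2)\models\forall Y\,\Psi(X_0',Y)$.  Given $Y\in\mathcal{S}_2$ with $\Delta_1^{X_0'}$ definition $\chi(z,p)$, put $\tilde Y=\{z\in M\mid M\models\chi(z,p)\}$: this set lies in $\mathcal{S}$ by $\Delta^0_1$-comprehension in $(M,\mathcal{S})$, and $\Sigma_1^{X_0}$-elementarity of $M_2$ gives $\tilde Y\cap M_2=Y$.  The true statement $(M,\mathcal{S})\models\Psi(X_0,\tilde Y)$ unfolds through $\chi$ into a $\Pi_3^{X_0}(M)$ sentence $\Psi^*(p)$.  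Writing $\Pi_3^{X_0}=\forall\Sigma_2^{X_0}$ and applying $\Sigma_2^{X_0}$-elementarity at each instance of the outermost universal quantifier transfers $\Psi^*(p)$ downward to $M_2$, which says exactly $(M_2,\mathcal{S}_2)\models\Psi(X_0',Y)$.  The main obstacle is precisely this last transfer: $\Sigma_2^{X_0}$-elementarity does not preserve $\Pi_3^{X_0}$ formulas in general, but it does preserve them \emph{downward}, and that matches exactly the $\Pi^0_3$ complexity of $\Psi$ in the hypothesis.  The alignment between the $\Sigma_2^{X_0}$-definable hull (needed for uniformization and the failure of B$\Sigma^0_2$) and this $\Pi_3^{X_0}$-downward transfer of $\Psi$ is what makes the choice of submodel exactly right.
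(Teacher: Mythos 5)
Your proposal is correct, and its skeleton coincides with the paper's: the $\Sigma_2$-definable hull of Lemma~\ref{lem:bs2}, relativized to a unary predicate for the witness $X_0$ (legitimate because RCA$_0$'s $\Sigma^0_1$ induction carries set parameters), followed by the passage to the $\Delta_1$-definable second-order part as in \cite[\S IX.1]{sosoa}. Where you genuinely diverge is at the one non-obvious move in the paper's proof: the paper internalizes the second-order hypothesis once and for all as the first-order $\Pi_3$ sentence $\Psitilde(X)\equiv\forall Y\,(Y\leT X\limp\Psi(X,Y))$ (quantifying over Turing indices relative to $X$), observes that $(M,X_0)\models\Psitilde(X)$ because every $Y\leT X_0$ lies in the second-order part, and then invokes Lemma~\ref{lem:bs2} verbatim in the expanded language, the preservation of $\Psitilde$ being subsumed in the lemma's ``$\Pi_3$ sentences pass to $M_2$'' clause. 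You avoid $\Psitilde$ entirely and instead verify $\forall Y\,\Psi(X_0',Y)$ instance by instance, substituting the $\Delta_1^{X_0'}$ definition $\chi(\cdot\,,p)$ of each $Y\in\mathcal{S}_2$ into $\Psi$ and transferring the resulting $\Pi_3^{X_0}$ sentence $\Psi^*(p)$ downward along $\Sigma_2$-elementarity. The underlying mechanism is identical ($\Pi_3$ goes down along $\Sigma_2$-elementary hulls); your version trades the paper's one-time complexity computation (that $\Psitilde$ is genuinely $\Pi_3$, which needs formalized Turing reducibility and absorption of the $\Pi_2$ totality hypothesis into the prenex form) for a per-instance substitution argument. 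One small point you glossed, worth a line: before invoking $\Delta^0_1$-comprehension in $(M,\mathcal{S})$ to get $\tilde Y\in\mathcal{S}$, you need the $\Delta_1$-ness of $\chi$ --- a priori known only over $M_2$ --- to transfer \emph{upward} to $M$; this is fine because the statement that the $\Sigma_1$ and $\Pi_1$ forms of $\chi$ agree is $\Pi_2$ with parameter $p\in M_2$, and $\Sigma_2$-elementarity moves $\Pi_2$ statements in both directions. The same equivalence, read in $M_2$, guarantees that $\chi$ still defines $Y$ as a $\Delta_1$ set there, so that $M_2\models\Psi^*(p)$ really does express $(M_2,\mathcal{S}_2)\models\Psi(X_0',Y)$. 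With those two lines supplied, your argument is complete and yields exactly the paper's conclusion.
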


\begin{proof}
  Consider the $\Pi^0_3$ formula $\Psitilde(X)\equiv\forall Y\,(Y\leT
  X\limp\Psi(X,Y))$.  We may view $\Psitilde(X)$ as a $\Pi_3$ sentence
  in the language of first-order arithmetic with an extra unary
  predicate $X$.  Let $(M,X_M)$ be a nonstandard model of
  I$\Sigma_1(X)$ + $\Psitilde(X)$.  As in the proof of Lemma
  \ref{lem:bs2}, fix a nonstandard $c\in M$ and let $M_2=\{x\in M\mid
  x$ is $\Sigma_2(M,X_M)$-definable from $c\}$.  Also as in the proof
  of Lemma \ref{lem:bs2}, we have that $(M_2,X_M\cap M_2)$ satisfies
  I$\Sigma_1(X)$ + $\Psitilde(X)$ and does not satisfy B$\Sigma_2(X)$.
  Passing to the language of second-order arithmetic, it follows by
  \cite[\S IX.1]{sosoa} that $(M_2,\Delta_1(M_2,X_M\cap M_2))$
  satisfies RCA$_0$ + $\exists X\,\forall Y\,\Psi(X,Y)$ and does not
  satisfy B$\Sigma^0_2$.
\end{proof}

\begin{cor}
  \label{cor:bs2}
  RCA$_0$ + WO$(\omega^\omega)$ does not prove B$\Sigma^0_2$.  More
  generally, for any primitive recursive linear ordering $\alpha$ of
  the natural numbers, if RCA$_0$ + WO$(\alpha)$ is consistent then
  RCA$_0$ + WO$(\alpha)$ does not prove B$\Sigma^0_2$.
\end{cor}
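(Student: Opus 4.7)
The plan is to derive this as an immediate corollary of Theorem \ref{thm:bs2} by observing that WO$(\alpha)$ has the syntactic form required there.

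First, I would write out the logical complexity of WO$(\alpha)$. For a primitive recursive linear ordering $\alpha$ of the natural numbers, the predicate ``$f$ is a descending sequence through $\alpha$'' is $\Pi^0_1$ (it asserts $\forall i\,(f(i+1)<_\alpha f(i))$, where $<_\alpha$ is primitive recursive). Hence WO$(\alpha)$, which asserts that no such $f$ exists, can be written as $\forall Y\,\Psi(Y)$ where $\Psi(Y)\equiv\exists i\,(Y(i+1)\not<_\alpha Y(i))$ is $\Sigma^0_1$, and in particular $\Pi^0_3$. Prefixing a vacuous existential set quantifier, WO$(\alpha)$ becomes a $\Sigma^1_2$ sentence $\exists X\,\forall Y\,\Psi(X,Y)$ of precisely the form required by Theorem \ref{thm:bs2}.

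Next I would apply Theorem \ref{thm:bs2} directly: assuming RCA$_0$ + WO$(\alpha)$ is consistent, the theorem gives that it cannot prove B$\Sigma^0_2$. For the first sentence of the corollary, it remains only to verify that RCA$_0$ + WO$(\omega^\omega)$ is actually consistent; this is immediate from the fact that the standard $\omega$-model of RCA$_0$ satisfies WO$(\omega^\omega)$ (since $\omega^\omega$ really is well ordered), or alternatively from the first theorem of Section \ref{sec:WO}, which gives RCA$_0$ + I$\Sigma^0_2 \vdash$ WO$(\omega^\omega)$ and whose hypothesis is consistent.

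There is essentially no obstacle here; the content of the corollary is the syntactic observation that WO$(\alpha)$ falls within the $\Sigma^1_2$ class with $\Pi^0_3$ matrix to which Theorem \ref{thm:bs2} applies. The only minor point worth stating carefully is that one uses primitive recursiveness (or at least recursiveness) of $\alpha$ to keep the matrix arithmetic at the $\Sigma^0_1$ level; for a non-recursive ordering one would lose this representation, but that is outside the scope of the statement.
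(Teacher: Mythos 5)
Your proposal is correct and takes essentially the same route as the paper, which likewise treats the corollary as a direct special case of Theorem \ref{thm:bs2} by observing that WO$(\alpha)$ can be written with a $\Pi^0_3$ matrix (the paper uses the diagonal form $\forall Y\,\Psi(Y,Y)$ with $X$ playing no real role, which is the same device as your vacuous $\exists X$). The only cosmetic differences are that you make explicit the consistency of RCA$_0$ + WO$(\omega^\omega)$ via the standard $\omega$-model, which the paper leaves implicit, and that with $Y$ officially a set coding a function the matrix is $\Sigma^0_2$ rather than $\Sigma^0_1$, which still lies within $\Pi^0_3$ as you note.
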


\begin{proof}
  WO$(\alpha)$ can be written in the form $\forall Y\,\Psi(Y,Y)$ where
  $\Psi(X,Y)$ is as in the hypothesis of Theorem \ref{thm:bs2}.  Our
  corollary is then a special case of Theorem \ref{thm:bs2}.
\end{proof}

\section{WO$(\omega^\omega)$ + B$\Sigma^0_2$ does not imply I$\Sigma^0_2$}
\label{sec:is2}

In this section we show that I$\Sigma^0_2$ is not provable in RCA$_0$
+ WO$(\omega^\omega)$ + B$\Sigma^0_2$.

\begin{lem}
  \label{lem:is2}
  In the language of first-order arithmetic, let $\Psi$ be a $\Pi_3$
  sentence.  If B$\Sigma_2$ + $\Psi$ is consistent, then B$\Sigma_2$ +
  $\Psi$ does not prove I$\Sigma_2$.
\end{lem}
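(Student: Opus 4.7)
The plan is to mirror the construction in Lemma~\ref{lem:bs2} one level up in the arithmetic hierarchy. Let $M$ be a countable nonstandard model of $B\Sigma_2 + \Psi$; I aim to produce a model of $B\Sigma_2 + \Psi + \neg I\Sigma_2$. The obvious analogue---taking a $\Sigma_3(M)$-definable closure of a nonstandard $c$---does not go through: Remark~\ref{rem:unif} supplies $\Sigma_3$-uniformization only under $I\Sigma_2$, and in any case a genuinely $\Sigma_3$-elementary substructure of $M$ would inherit $I\Sigma_2$ from $M$ by $\Pi_4$-downward absoluteness, which is the opposite of what we want. So the construction must diverge from the previous one.

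I would split into cases. If $M \not\models I\Sigma_2$, take $M$ itself as the witness. Otherwise $M \models I\Sigma_2 + \Psi$, and I would apply a Paris--Kirby-style cut: choose a nonstandard $c \in M$ supplied by a suitable indicator and let $N$ be the closure in $M$ of $\{x : x \le c\}$ under the $\Sigma_2(M)$-definable (partial) Skolem functions produced by Remark~\ref{rem:unif} (legitimate since $M \models I\Sigma_1$). Closure under $\Sigma_2$-Skolem functions forces $N$ to be a $\Sigma_2$-elementary initial segment of $M$ by Tarski--Vaught, so $\Psi$ transfers downward to $N$ because $\Psi$ is $\Pi_3$. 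The same closure property lets one check $N \models B\Sigma_2$ directly: each $\Sigma_2$-collection bound needed inside $N$ is itself $\Sigma_2$-definable in $M$ (as a bounded sum of Skolem values, which can be coded in $B\Sigma_2$) and therefore belongs to $N$.

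The main obstacle is the Paris--Kirby indicator argument, which uses $M \models I\Sigma_2$ essentially (for $\Pi_2$-overspill) to produce a nonstandard $c$ whose $\Sigma_2$-closure in $M$ is a proper initial segment. Once $N \subsetneq M$ is such a proper $\Sigma_2$-elementary cut, the usual cut-theoretic argument delivers $N \not\models I\Sigma_2$: the $\Pi_2(N)$-definable set of elements below every standard-length $\Sigma_2$-Skolem iterate of $c$ has no $N$-least upper bound, violating the $\Pi_2$-least-number principle and hence $I\Sigma_2$. Combining this with the preservation of $B\Sigma_2 + \Psi$ already verified gives $N \models B\Sigma_2 + \Psi + \neg I\Sigma_2$, as desired.
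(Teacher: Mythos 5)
There is a genuine gap, and it is fatal rather than cosmetic. Your construction rests on the claim that the closure $N$ of $\{x\le c\}$ under $\Sigma_2$-Skolem functions is ``a $\Sigma_2$-elementary \emph{initial segment} of $M$ by Tarski--Vaught.'' Tarski--Vaught gives $\Sigma_2$-elementarity, but nothing makes the hull downward closed: the Skolem functions produce elements far above $c$, and there is no reason every element below such a value is itself $\Sigma_2$-definable from parameters $\le c$. Worse, the two properties you want for $N$ are jointly unachievable in your main case. You assume there that $M\models\mathrm{I}\Sigma_2$; but any $\Sigma_2$-elementary proper cut $N$ of such an $M$ \emph{inherits} $\mathrm{I}\Sigma_2$: given a $\Sigma_2$ formula $\varphi$ with parameters in $N$ satisfied by some $a\in N$, the $M$-least witness $b\le a$ exists by $\mathrm{I}\Sigma_2$ in $M$, lies in $N$ since $N$ is downward closed, and both $\varphi(b)$ and the $\Pi_2$ minimality facts $\lnot\varphi(b')$ for $b'<b$ transfer between $M$ and $N$ because $\Sigma_2$-elementarity is two-way; so $N$ satisfies the $\Sigma_2$ least-number principle, hence $\mathrm{I}\Sigma_2$. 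Thus a ``$\Sigma_2$-elementary cut'' can never witness $\lnot\,\mathrm{I}\Sigma_2$ here. Your final step is also not well formed: the set of elements ``below every standard-length $\Sigma_2$-Skolem iterate of $c$'' quantifies over standard numbers and is not defined by any $\Pi_2$ (or other arithmetical) formula, so it cannot violate an instance of the least-number principle inside $N$.

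The paper's proof threads exactly this needle by giving up $\Sigma_2$-elementarity of the cut. It keeps the pointwise $\Sigma_2$-definable hull $M_2$ of $c$ from the proof of Lemma \ref{lem:bs2} (which is $\Sigma_2$-elementary in $M$ but \emph{not} a cut, and fails B$\Sigma_2$), arranges that $M_2$ is not cofinal in $M$, and passes to the downward closure $\Mhat_2$ of $M_2$. This $\Mhat_2$ is only $\Sigma_1$-elementary in $M$ (Claim 1), which is why $\mathrm{I}\Sigma_2$ can fail in it: the uniformizing formula $e<c\land\Phibar(e,x,c)$ gives a $\Sigma_2(\Mhat_2)$-definable map of a bounded set onto $M_2$, which is unbounded in $\Mhat_2$ (Claim 3). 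The properties you tried to get for free are then recovered by separate arguments: B$\Sigma_2$ in $\Mhat_2$ via a least-number-plus-underspill argument in $M$ (Claim 2), and $\Psi$ via B$\Sigma_2$ in $M$ together with the cofinality of the $\Sigma_2$-elementary $M_2$ in $\Mhat_2$ (Claim 4) --- note that mere $\Sigma_1$-elementarity of $\Mhat_2$ would not transfer a $\Pi_3$ sentence. Your instinct that the Lemma \ref{lem:bs2} construction must be modified, and your case split on whether $M\models\mathrm{I}\Sigma_2$ (which echoes the paper's footnote justifying non-cofinality of $M_2$), are both sound; but the modification needed is a $\Sigma_1$-elementary cut containing a cofinal $\Sigma_2$-elementary non-cut, not a $\Sigma_2$-elementary cut, which provably cannot exist below a model of $\mathrm{I}\Sigma_2$ without satisfying $\mathrm{I}\Sigma_2$ itself.
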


\begin{proof}
  Let $M$ be a nonstandard model of B$\Sigma_2$ + $\Psi$.  As in the
  proof of Lemma \ref{lem:bs2}, fix a nonstandard element $c\in M$ and
  consider the $\Sigma_2$-elementary submodel $M_2=\{a\in M\mid a$ is
  $\Sigma_2(M)$-definable from $c\}$.  We may safely
  assume\footnote{For instance, this would be the case if $M$ is
    countably saturated, or if $M$ satisfies I$\Sigma_2$.} that $M_2$
  is not cofinal in $M$.  We shall show that the submodel
  \begin{center}
    $\Mhat_2=\{x\in M\mid(\exists a\in M_2)\,(x<a)\}$
  \end{center}
  satisfies B$\Sigma_2$ + $\Psi$ + $\lnot\,$I$\Sigma_2$.

  Claim 1: $\Mhat_2$ is a $\Sigma_1$-elementary submodel of $M$.  To
  see this, let $\Phi(x)$ be a $\Sigma_1$ formula with no free
  variables other than $x$.  Given $u\in\Mhat_2$ such that $M$
  satisfies $\Phi(u)$, we need to show that $\Mhat_2$ satisfies
  $\Phi(u)$.  Write $\Phi(x)$ as $\exists y\,\Theta(x,y)$ where
  $\Theta(x,y)$ is $\Pi_0$.  Since $M_2$ satisfies I$\Sigma_1$, $M_2$
  satisfies $\forall a\,\exists b\,(\forall
  x<a)\,(\Phi(x)\limp(\exists y<b)\,\Theta(x,y))$.  Let $a\in M_2$ be
  such that $u<a$, and let $b\in M_2$ be such that $M_2$ satisfies
  $(\forall x<a)\,(\Phi(x)\limp(\exists y<b)\,\Theta(x,y))$.  Since
  $M_2$ is a $\Sigma_2$-elementary submodel of $M$, it follows that
  $M$ also satisfies $(\forall x<a)\,(\Phi(x)\limp(\exists
  y<b)\,\Theta(x,y))$.  In particular $M$ satisfies $(\exists
  y<b)\,\Theta(u,y)$, so let $v\in M$ be such that $v<b$ and $M$
  satisfies $\Theta(u,v)$.  Since $\Mhat_2$ is an initial segment of
  $M$, we have $v\in\Mhat_2$.  Moreover $\Mhat_2$ satisfies
  $\Theta(u,v)$, hence $\Mhat_2$ satisfies $\Phi(u)$, Q.E.D.

  Claim 2: $\Mhat_2$ satisfies B$\Sigma_2$.  To see this, assume that
  $\Mhat_2$ satisfies $\forall x\,\exists y\,\Phi(x,y)$ where
  $\Phi(x,y)$ is $\Sigma_2$ with parameters in $\Mhat_2$.  We need to
  show that $\Mhat_2$ satisfies $\forall a\,\exists b\,(\forall
  x<a)\,(\exists y<b)\,\Phi(x,y)$.  By means of a pairing function, we
  may safely assume that $\Phi(x,y)$ is $\Pi_1$ with parameters in
  $\Mhat_2$.  By Claim 1 we have $(\forall x\in\Mhat_2)\,(\exists
  y\in\Mhat_2)\,(M$ satisfies $\Phi(x,y))$.  Fix $a\in\Mhat_2$.  Since
  $\Mhat_2$ is an initial segment of $M$, we have $(\forall b\in
  M\setminus\Mhat_2)\,(M$ satisfies $(\forall x<a)\,(\exists
  y<b)\,\Phi(x,y))$.  And then, since $M$ satisfies I$\Sigma_1$, there
  is a least $b\in M$ such that $M$ satisfies $(\forall x<a)\,(\exists
  y<b)\,\Phi(x,y)$, and by underspill this least $b$ belongs to
  $\Mhat_2$.  Using Claim 1 again, we now see that $\Mhat_2$ satisfies
  $(\forall x<a)\,(\exists y<b)\,\Phi(x,y)$, Q.E.D.

  Claim 3: $\Mhat_2$ satisfies $\lnot\,$I$\Sigma_2$.  To see this,
  recall from the proof of Lemma \ref{lem:bs2} that the uniformizing
  formula $e<c\land\Phibar(e,x,c)$ gives a $\Sigma_2(M_2)$-definable
  mapping from a bounded subset of $M_2$ onto $M_2$.  Since $M_2$ is a
  cofinal $\Sigma_2$-elementary submodel of $\Mhat_2$, this same
  formula gives a $\Sigma_2(\Mhat_2)$-definable mapping from a bounded
  subset of $\Mhat_2$ onto an unbounded subset of $\Mhat_2$.  This
  implies that $\Mhat_2$ does not satisfy I$\Sigma_2$, Q.E.D.

  As a point of interest, note that our proofs of Claims 1 through 3
  used only the assumption that $M$ satisfies I$\Sigma_1$.  The
  assumption that $M$ satisfies B$\Sigma_2$ + $\Psi$ was not used in
  those proofs, but it will be used in the proof of Claim 4.

  Claim 4: $\Mhat_2$ satisfies $\Psi$.  To see this, write $\Psi$ as
  $\forall x\,\exists y\,\Phi(x,y)$ where $\Phi(x,y)$ is $\Pi_1$ with
  no free variables other than $x$ and $y$.  We need to show that
  $(\forall x\in\Mhat_2)\,(\exists y\in\Mhat_2)\,(\Mhat_2$ satisfies
  $\Phi(x,y))$.  By Claim 1 plus the fact that $M_2$ is cofinal in
  $\Mhat_2$, it will suffice to show that $(\forall a\in
  M_2)\,(\exists b\in M_2)\,(M$ satisfies $(\forall x<a)\,(\exists
  y<b)\,\Phi(x,y))$.  Fix $a\in M_2$.  Since $M$ satisfies B$\Sigma_2$
  + $\forall x\,\exists y\,\Phi(x,y)$, there exists $b$ in $M$ such
  that $M$ satisfies $(\forall x<a)\,(\exists y<b)\,\Phi(x,y)$.  But
  then, because $M_2$ is a $\Sigma_2$-elementary submodel of $M$ and
  $a$ belongs to $M_2$, there exists such a $b$ which also belongs to
  $M_2$, Q.E.D.
\end{proof}

\begin{thm}
  \label{thm:is2}
  In the language of second-order arithmetic, let $\exists X\,\forall
  Y\,\Psi(X,Y)$ be a $\Sigma^1_2$ sentence such that $\Psi(X,Y)$ is a
  $\Pi^0_3$ formula.  If RCA$_0$ + B$\Sigma^0_2$ + $\exists X\,\forall
  Y\,\Psi(X,Y)$ is consistent, then RCA$_0$ + B$\Sigma^0_2$ + $\exists
  X\,\forall Y\,\Psi(X,Y)$ does not prove I$\Sigma^0_2$.
\end{thm}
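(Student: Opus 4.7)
The plan is to mirror Theorem~\ref{thm:bs2}, now invoking Lemma~\ref{lem:is2} in place of Lemma~\ref{lem:bs2}. As there, introduce the $\Pi^0_3$ formula $\Psitilde(X)\equiv\forall Y\,(Y\leT X\limp\Psi(X,Y))$ and view it as a $\Pi_3$ sentence in the first-order language augmented by a unary predicate $X$. From any countable nonstandard model of RCA$_0$ + B$\Sigma^0_2$ + $\exists X\,\forall Y\,\Psi(X,Y)$, extract a witness $X_M$ to the $\Sigma^1_2$ axiom; the resulting first-order model $(M,X_M)$ satisfies B$\Sigma_2(X)$ + $\Psitilde(X)$. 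Replacing $M$ by a countably saturated elementary extension (which preserves both the scheme B$\Sigma_2(X)$ and the $\Pi_3$ sentence $\Psitilde(X)$), I may assume the cofinality condition from the footnote to Lemma~\ref{lem:is2}.

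Next, run the construction of Lemma~\ref{lem:is2} relativized to the predicate $X$: fix a nonstandard $c\in M$, form $M_2=\{a\in M\mid a$ is $\Sigma_2(M,X_M)$-definable from $c\}$, and let $\Mhat_2=\{x\in M\mid(\exists a\in M_2)(x<a)\}$. Claims 1--4 of Lemma~\ref{lem:is2} go through verbatim with $X$ as a free predicate throughout, since their arguments interact with formulas only through $\Sigma_k/\Pi_k$ quantifier counting and the schemes I$\Sigma_1$ and B$\Sigma_2$, all of which relativize uniformly. This yields that $(\Mhat_2, X_M\cap\Mhat_2)$ satisfies B$\Sigma_2(X)$ + $\Psitilde(X)$ + $\lnot\,$I$\Sigma_2(X)$.

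Finally, pass to the second-order setting via \cite[\S IX.1]{sosoa}. Letting $\mathcal{S}=\Delta_1(\Mhat_2, X_M\cap\Mhat_2)$, the structure $(\Mhat_2,\mathcal{S})$ satisfies RCA$_0$, and B$\Sigma^0_2$ is inherited from B$\Sigma_2(X)$ with parameter $X_M\cap\Mhat_2$, since every element of $\mathcal{S}$ is Turing-definable from this set. Because every $Y\in\mathcal{S}$ satisfies $Y\leT X_M\cap\Mhat_2$, the sentence $\Psitilde(X_M\cap\Mhat_2)$ yields $\forall Y\,\Psi(X_M\cap\Mhat_2,Y)$, so $X_M\cap\Mhat_2$ witnesses $\exists X\,\forall Y\,\Psi(X,Y)$ in the second-order sense. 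The failure of I$\Sigma^0_2$ is immediate from the $\Sigma_2(\Mhat_2, X_M\cap\Mhat_2)$-definable injection from Claim~3 of Lemma~\ref{lem:is2}, read as a $\Sigma^0_2$ formula with parameter $X_M\cap\Mhat_2\in\mathcal{S}$. The only nontrivial work beyond the template of Theorem~\ref{thm:bs2} is the relativization of Lemma~\ref{lem:is2} to an added unary predicate, which is routine since no step inspects the internal syntax of formulas beyond their quantifier complexity.
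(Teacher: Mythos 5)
Your proof is correct and takes essentially the same route as the paper's: relativize Lemma~\ref{lem:is2} to the added predicate $X$ via the $\Pi_3$ sentence $\Psitilde(X)$, then pass to the second-order structure $(\Mhat_2,\Delta_1(\Mhat_2,X_M\cap\Mhat_2))$ using \cite[\S IX.1]{sosoa}. Your additional care---securing the non-cofinality assumption by a saturated elementary extension and spelling out the second-order verification of B$\Sigma^0_2$, the witness $X_M\cap\Mhat_2$, and $\lnot\,$I$\Sigma^0_2$---merely makes explicit what the paper delegates to the footnote of Lemma~\ref{lem:is2} and to the citation.
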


\begin{proof}
  As in the proof of Theorem \ref{thm:bs2}, we may view the $\Pi^0_3$
  formula $\Psitilde(X)\equiv\forall Y\,(Y\leT X\limp\Psi(X,Y))$ as a
  $\Pi_3$ sentence in the language of first-order arithmetic with an
  extra unary predicate $X$.  As in the proof of Lemma \ref{lem:is2},
  let $(M,X_M)$ be a nonstandard model of B$\Sigma_2(X)$ +
  $\Psitilde(X)$, fix a nonstandard $c\in M$, let $M_2=\{a\in M\mid a$
  is $\Sigma_2(M,X_M)$-definable from $c\}$, and let $\Mhat_2=\{x\in
  M\mid(\exists a\in M_2)\,(x<a)\}$.  Also as in the proof of Lemma
  \ref{lem:is2}, we have that $(\Mhat_2,X_M\cap\Mhat_2)$ satisfies
  B$\Sigma_2(X)$ + $\Psitilde(X)$ + $\lnot\,$I$\Sigma_2(X)$.  Passing
  to the language of second-order arithmetic, it follows by \cite[\S
  IX.1]{sosoa} that $(\Mhat_2,\Delta_1(\Mhat_2,X_M\cap\Mhat_2))$
  satisfies RCA$_0$ + B$\Sigma^0_2$ + $\exists X\,\forall
  Y\,\Psi(X,Y)$ + $\lnot\,$I$\Sigma^0_2$.
\end{proof}

\begin{cor}
  \label{cor:is2}
  RCA$_0$ + B$\Sigma^0_2$ + WO$(\omega^\omega)$ does not prove
  I$\Sigma^0_2$.  More generally, for any primitive recursive linear
  ordering $\alpha$ of the natural numbers, if RCA$_0$ + B$\Sigma^0_2$
  + WO$(\alpha)$ is consistent then RCA$_0$ + B$\Sigma^0_2$ +
  WO$(\alpha)$ does not prove I$\Sigma^0_2$.
\end{cor}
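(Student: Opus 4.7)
The plan is to follow the proof of Corollary \ref{cor:bs2} essentially verbatim, now applying Theorem \ref{thm:is2} in place of Theorem \ref{thm:bs2}.

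First I would observe that for any primitive recursive linear ordering $\alpha$, the sentence WO$(\alpha)$ --- formulated as ``there is no descending sequence through $\alpha$'' --- has the shape $\forall Y\,\Psi_0(Y)$, where $\Psi_0(Y)$ asserts that the function coded by $Y$ is not $\alpha$-descending. Since $\alpha$ is primitive recursive, the predicate ``$f(n+1)<_\alpha f(n)$'' is decidable, so $\Psi_0(Y)$ is $\Pi^0_1$, and in particular $\Pi^0_3$. Prepending a vacuous existential set quantifier, WO$(\alpha)$ takes the form $\exists X\,\forall Y\,\Psi(X,Y)$ with $\Psi(X,Y)\equiv\Psi_0(Y)$, matching the hypothesis of Theorem \ref{thm:is2}.

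Then, under the assumption that RCA$_0$ + B$\Sigma^0_2$ + WO$(\alpha)$ is consistent, Theorem \ref{thm:is2} yields immediately that this theory does not prove I$\Sigma^0_2$. The special case $\alpha=\omega^\omega$ gives the first sentence of the corollary; here consistency is automatic, since B$\Sigma^0_2$ and WO$(\omega^\omega)$ both follow from I$\Sigma^0_2$ (as noted in Section \ref{sec:WO}), and RCA$_0$ + I$\Sigma^0_2$ is consistent.

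There is no genuine obstacle, since the substantive model-theoretic work has already been carried out in Lemma \ref{lem:is2} and Theorem \ref{thm:is2}. The corollary reduces to checking the syntactic form of WO$(\alpha)$ and invoking the theorem, in complete parallel with how Corollary \ref{cor:bs2} follows from Theorem \ref{thm:bs2}.
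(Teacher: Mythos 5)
Your proposal is correct and takes essentially the same approach as the paper, whose own proof likewise just observes that WO$(\alpha)$ has the syntactic shape required by Theorem \ref{thm:is2} (a $\Pi^0_3$ matrix behind the set quantifiers, with the existential set quantifier playing no real role) and then invokes that theorem, with consistency in the $\omega^\omega$ case supplied by Section \ref{sec:WO} exactly as you note. One cosmetic quibble: in the standard set-variable formulation, ``$Y$ codes a total $\alpha$-descending function'' is $\Pi^0_2$, so $\Psi_0(Y)$ is $\Sigma^0_2$ (and even with function variables its negation-of-descending form is $\Sigma^0_1$) rather than $\Pi^0_1$ --- but this is harmless, since either way $\Psi_0(Y)$ is equivalent to a $\Pi^0_3$ formula, which is all Theorem \ref{thm:is2} requires.
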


\begin{proof}
  WO$(\alpha)$ can be written as $\forall Y\,\Psi(Y,Y)$ where
  $\Psi(X,Y)$ is as in the hypothesis of Theorem \ref{thm:is2}.  Our
  corollary is then a special case of Theorem \ref{thm:is2}.
\end{proof}

\begin{rem}
  Theorems \ref{thm:bs2} and \ref{thm:is2} and Corollaries
  \ref{cor:bs2} and \ref{cor:is2} hold more generally, for all
  $k\ge2$, replacing $\Sigma^0_2$ by $\Sigma^0_k$ and $\Pi^0_3$ by
  $\Pi^0_{k+1}$, with essentially the same proofs.
\end{rem}

\phantomsection
\addcontentsline{toc}{section}{References}


\end{document}